
\documentclass[12pt,sloppy]{article}
\usepackage{graphicx}
\usepackage{subfigure}
\usepackage{latexsym}
\usepackage{amsfonts}
\usepackage{amsthm}
\usepackage{amsmath}
\usepackage{caption}
\usepackage{chngpage}
\setlength\arraycolsep{3pt}
\setlength{\textwidth}{6.3in}
\setlength{\evensidemargin}{0.25in}
\setlength{\oddsidemargin}{0.25in}
\setlength{\textheight}{9.0in}
\setlength{\topmargin}{-0.5in}
\setlength{\parskip}{1.5mm}
\setlength{\baselineskip}{1.7\baselineskip}

\author{{\bf Maya Mohsin Ahmed }}
\title{{\LARGE {\bf Unraveling the secret of Benjamin Franklin: \\  Constructing Franklin squares of higher order }}}
\date{}

\newtheorem{prop}{Proposition}[section]

\newtheorem{defn}{Definition}[section]

\begin{document}     
\maketitle           

\begin{abstract}
Benjamin  Franklin  constructed  three  squares  which  have
amazing properties, and his  method of construction has been
a mystery  to date. In  this article, we divulge  his secret
and show how to construct such squares for any order.
\end{abstract}

\section{Introduction}

\begin{figure}[h]
 \begin{center}
     \includegraphics[scale=0.5]{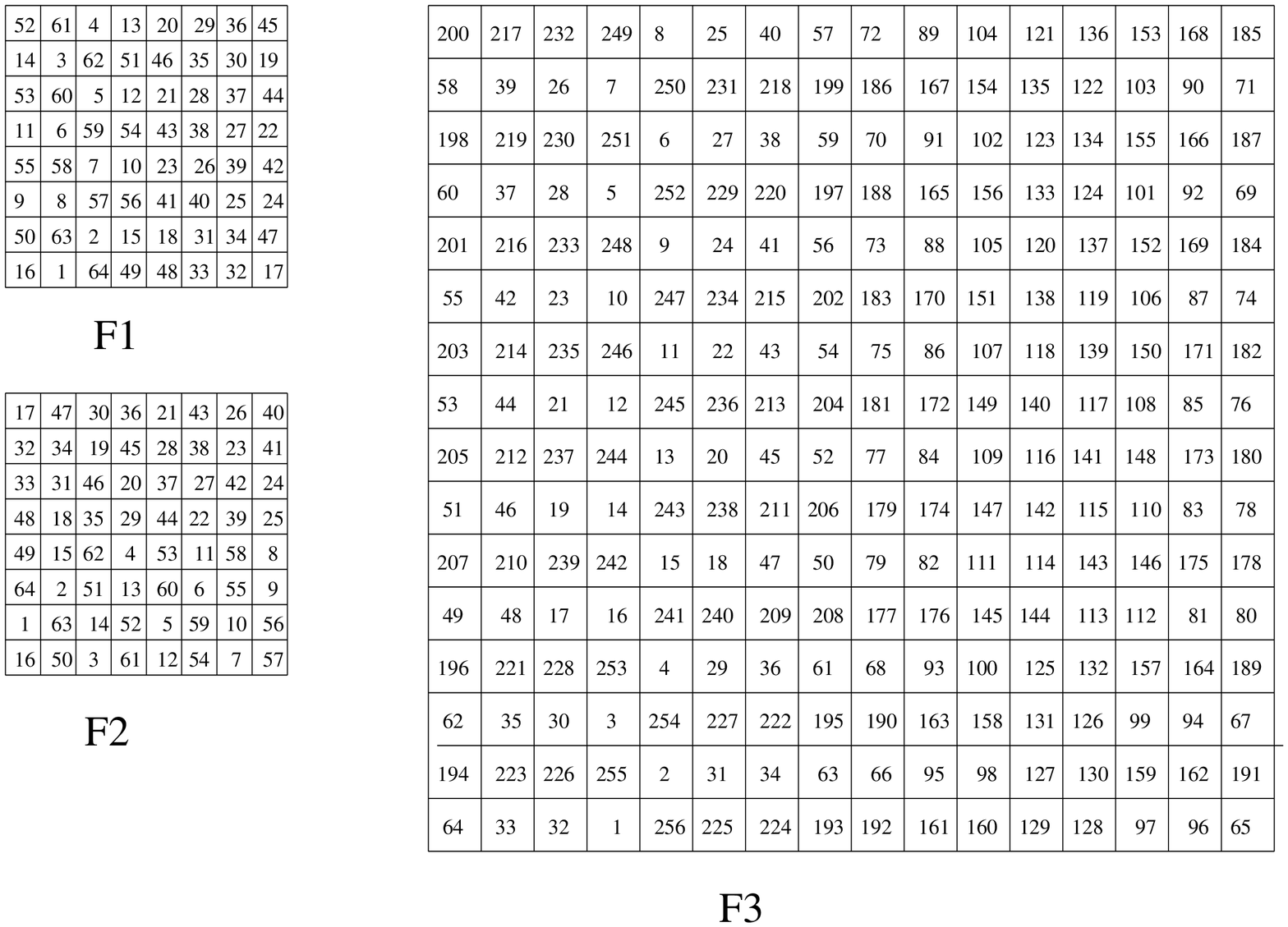}
\caption{Squares constructed by Benjamin Franklin} \label{franklinsquares}
 \end{center}
 \end{figure}
\begin{figure}[h]
 \begin{center}
     \includegraphics[scale=0.3]{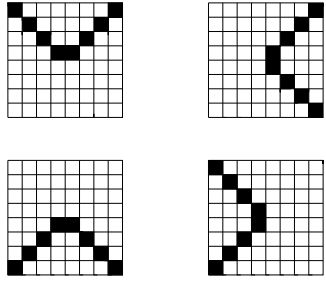}
\caption{The four main bend diagonals \cite{pasles}} \label{bent_diags}
 \end{center}
 \end{figure}
\begin{figure}[h]
 \begin{center}
     \includegraphics[scale=0.4]{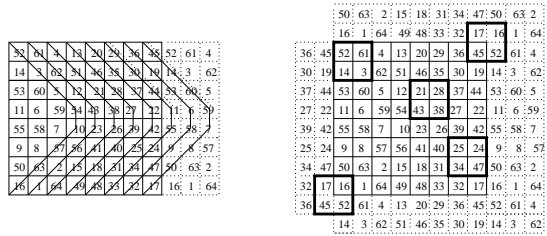}
\caption{Continuous properties of Franklin squares} \label{parallelbends}
 \end{center}
 \end{figure}

The well-known squares in Figure \ref{franklinsquares} were
constructed by Benjamin Franklin (see \cite{andrews} and
\cite{pasles}). In a letter to Peter Collinson (see \cite{andrews}) he
describes the properties of the $8 \times 8$ square, F1 as:
\begin{enumerate}
\item The entries of every row and column add to a common sum
called the {\em magic sum}. 
\item In every half row and half column the entries add to half the magic sum.
\item The entries of the main bend diagonals (see Figure
\ref{bent_diags}) and all the bend diagonals parallel to it (see
Figure \ref{parallelbends}) add to the magic sum.
\item The four corner numbers with the four middle numbers add to the magic
sum. 
\end{enumerate}

Franklin mentions that the  square F1 has five other curious
properties without  listing them. He also says,  in the same
letter,  that  the $16  \times  16$  square,  F3, in  Figure
\ref{franklinsquares}  has  all  the  properties of  the  $8
\times  8$  square, but  in  addition,  every  $4 \times  4$
sub-square adds to the common  magic sum. 

We now  bring to attention some important  facts about these
squares. The entries of the squares are from the set $\{1,2,
\dots, n^2\}$, where $n=8$  or $n=16$. Every integer in this
set  occurs in  the square  exactly once.  The $8  \times 8$
squares have magic sum 260 and the $16 \times 16$ square has
magic sum 2056.  Unlike, F1 or F2, observe  that for F3, the
four corner numbers with the four middle numbers add to half
the magic sum. In addition,  observe that every $2 \times 2$
sub-square in F1  and F2 adds to half the  magic sum, and in
F3 adds to one-quarter the magic sum. The property of the $2
\times  2$  sub-squares  adding  to  a common  sum  and  the
property of bend diagonals adding  to the magic sum are {\em
  continuous  properties}. By  continuous  property we  mean
that if  we imagine  the square is  the surface of  a torus;
i.e. opposite  sides of the square are  glued together, then
the bend  diagonals or the  $2 \times 2$ sub-squares  can be
translated and still the corresponding sums hold (see Figure
\ref{parallelbends}). It  is worth noticing  that the fourth
property listed by Benjamin Franklin is redundant because of
the  continuous  property of  the  $2  \times 2$  sub-squares
adding  to  a  common   sum.  Moreover,  the  $2  \times  2$
sub-square  property   implies  that  every   $4  \times  4$
sub-square adds to the magic sum in F3. For a detailed study
of these three squares constructed by Benjamin Franklin, see
\cite{ma,andrews,pasles}.

From now on, row sum, column sum, or bend diagonal sum, etc.
mean that  we are adding  the entries of those  elements. We
use the  description provided  by Benjamin Franklin  and our
observations to define {\em Franklin squares}.

\begin{defn}[Franklin Square]Consider
 an integer, $n=2^r$ such that $r \geq 3$. Let the magic sum
 be  denoted by  $M$ and  $N=n^2+1$.  We define  an {\em  $n
   \times n$  Franklin square} to  be a $n \times  n$ matrix
 with the following properties:
\begin{enumerate}
\item  Every  integer from  the  set  $\{1,2, \dots,  n^2\}$
  occurs  exactly once  in the  square. Consequently,  \[M =
  \frac{n}{2} N.\]
\item All  the the half  rows, half columns add  to one-half
  the magic sum. Consequently,  all the rows and columns add
  to the magic sum.
\item All the bend diagonals add to the magic sum, continuously.

\item All  the $2  \times 2$ sub-squares  add to  $4M/n =2N$,
  continuously.   Consequently,  all   the   $4  \times   4$
  sub-squares add to $8N$, and the four corner numbers
  with the four middle numbers add to $4N$.
\end{enumerate}
\end{defn}

In \cite{ma}, we described how to construct Franklin squares
using Algebraic  Geometry and Combinatorics.  Those methods,
being  computationally  challenging,  are not  suitable  for
higher   orders.  In  this   article,  we   follow  Benjamin
Franklin's   footsteps  closely,   and   provide  elementary
techniques  to  construct a  Franklin  square  of any  given
order. The  strategies of construction of  Franklin squares F1
and F3 are  the same and our methods  are based on observing
these two squares. The  construction of F2 is different from
F1 and we  do not touch upon the construction  of F2 in this
article. A $32 \times 32$  Franklin square is given in Table
\ref{32franklineg}.  A  Franklin square  of  this order  has
never been  constructed before. Moreover, the  Maple code we
provide  in Section  \ref{codesection} can  easily construct
Franklin squares of higher  order. The details of the method
are given in in Section \ref{constructionsection}.

\newpage
{\tiny
\captionof{table}{A $32 \times 32$ Franklin square.} \label{32franklineg}
\begin{adjustwidth}{-1in}{.5in}
\[
\begin{array}{|c|c|c|c|c|c|c|c|c|c|c|c|c|c|c|c|c|c|c|c|c|c|c|c|c|c|c|c|c|c|c|c|} \hline
784 & 817 & 848 & 881 & 912 & 945 & 976 & 1009 & 16 &   49 &  80 &  113 & 144 & 177 & 208 & 241 
& 272 & 305 & 336 & 369 & 400 & 433 & 464 & 497 & 528 & 561 & 592 & 625 & 656 & 689 & 720 & 753  \\ \hline  
242 & 207 & 178 & 143 & 114 & 79 &  50 &  15 &   1010 & 975 & 946 & 911 & 882 & 847 & 818 & 783
& 754 & 719 & 690 & 655 & 626 & 591 & 562 & 527 & 498 & 463 & 434 & 399 & 370 & 335 & 306 & 271 \\ \hline
782 & 819 & 846 & 883 & 910 & 947 & 974 & 1011 & 14 &   51 &  78 &  115 & 142 & 179 & 206 & 243 
& 270 & 307 & 334 & 371 & 398 & 435 & 462 & 499 & 526 & 563 & 590 & 627 & 654 & 691 & 718 & 755 \\ \hline
244 & 205 & 180 & 141 & 116 & 77 &  52 &  13  &  1012 & 973 & 948 & 909 & 884 & 845 & 820 & 781 
& 756 & 717 & 692 & 653 & 628 & 589 & 564 & 525 & 500 & 461 & 436 & 397 & 372 & 333 & 308 & 269   \\ \hline
780 & 821 & 844 & 885 & 908 & 949 & 972 & 1013 & 12 &   53  & 76 &  117 & 140 & 181 & 204 & 245 
& 268 & 309 & 332 & 373 & 396 & 437 & 460 & 501 & 524 & 565 & 588 & 629 & 652 & 693 & 716 & 757 \\ \hline
246 & 203 & 182 & 139 & 118 & 75 &  54  & 11 &   1014 & 971 & 950 & 907 & 886 & 843 & 822 & 779
&758 & 715 & 694 & 651 & 630 & 587 & 566 & 523 & 502 & 459 & 438 & 395 & 374 & 331 & 310 & 267  \\ \hline
778 & 823 & 842 & 887 & 906 & 951 & 970 & 1015 & 10 &   55  & 74  & 119 & 138 & 183 & 202 & 247
& 266 & 311 & 330 & 375 & 394 & 439 & 458 & 503 & 522 & 567 & 586 & 631 & 650 & 695 & 714 & 759  \\ \hline
248 & 201 & 184 & 137 & 120 & 73  & 56  & 9    & 1016 & 969 & 952 & 905 & 888 & 841 & 824 & 777
&760 & 713 & 696 & 649 & 632 & 585 & 568 & 521 & 504 & 457 & 440 & 393 & 376 & 329 & 312 & 265   \\ \hline
785 & 816 & 849 & 880 & 913 & 944 & 977 & 1008 & 17   & 48 &  81 &  112 & 145 & 176 & 209 & 240
&273 & 304 & 337 & 368 & 401 & 432 & 465 & 496 & 529 & 560 & 593 & 624 & 657 & 688 & 721 & 752  \\ \hline 
239 & 210 & 175 & 146 & 111 & 82 &  47 &  18 &   1007 & 978 & 943 & 914 & 879 & 850 & 815 & 786 
&751 & 722 & 687 & 658 & 623 & 594 & 559 & 530 & 495 & 466 & 431 & 402 & 367 & 338 & 303 & 274 \\ \hline
787 & 814 & 851 & 878 & 915 & 942 & 979 & 1006 & 19   & 46 &  83  & 110 & 147 & 174 & 211 & 238
& 275 & 302 & 339 & 366 & 403 & 430 & 467 & 494 & 531 & 558 & 595 & 622 & 659 & 686 & 723 & 750  \\ \hline
237 & 212 & 173 & 148 & 109 & 84  & 45 &  20 &   1005 & 980 & 941 & 916 & 877 & 852 & 813 & 788
&749 & 724 & 685 & 660 & 621 & 596 & 557 & 532 & 493 & 468 & 429 & 404 & 365 & 340 & 301 & 276   \\ \hline
789 & 812 & 853 & 876 & 917 & 940 & 981 & 1004 & 21   & 44 &  85  & 108 & 149 & 172 & 213 & 236
&277 & 300 & 341 & 364 & 405 & 428 & 469 & 492 & 533 & 556 & 597 & 620 & 661 & 684 & 725 & 748  \\ \hline
235 & 214 & 171 & 150 & 107 & 86 &  43  & 22   & 1003 & 982 & 939 & 918 & 875 & 854 & 811 & 790
&747 &726 & 683 & 662 & 619 & 598 & 555 & 534 & 491 & 470 & 427 & 406 & 363 & 342 & 299 & 278   \\ \hline
791 & 810 & 855 & 874 & 919 & 938 & 983 & 1002 & 23  &  42  & 87  & 106 & 151 & 170 & 215 & 234
&279 & 298 & 343 & 362 & 407 & 426 & 471 & 490 & 535 & 554 & 599 & 618 & 663 & 682 & 727 & 746  \\ \hline
233 & 216 & 169 & 152 & 105 & 88 &  41  & 24 &   1001 & 984 & 937 & 920 & 873 & 856 & 809 & 792
&745 & 728 & 681 & 664 & 617 & 600 & 553 & 536 & 489 & 472 & 425 & 408 & 361 & 344 & 297 & 280  \\ \hline
793 & 808 & 857 & 872 & 921 & 936 & 985 & 1000 & 25 &   40  & 89  & 104 & 153 & 168 & 217 & 232
& 281 & 296 & 345 & 360 & 409 & 424 & 473 & 488 & 537 & 552 & 601 & 616 & 665 & 680 & 729 & 744  \\ \hline
231 & 218 & 167 & 154 & 103 & 90 &  39  & 26   & 999 &  986 & 935 & 922 & 871 & 858 & 807 & 794
&743 & 730 & 679 & 666 & 615 & 602 & 551 & 538 & 487 & 474 & 423 & 410 & 359 & 346 & 295 & 282  \\ \hline
795 & 806 & 859 & 870 & 923 & 934 & 987 & 998 &  27  &  38 &  91  & 102 & 155 & 166 & 219 & 230
& 283 & 294 & 347 & 358 & 411 & 422 & 475 & 486 & 539 & 550 & 603 & 614 & 667 & 678 & 731 & 742 \\ \hline
229 & 220 & 165 & 156 & 101 & 92 &  37  & 28  &  997 &  988 & 933 & 924 & 869 & 860 & 805 & 796
& 741 & 732 & 677 & 668 & 613 & 604 & 549 & 540 & 485 & 476 & 421 & 412 & 357 & 348 & 293 & 284 \\ \hline
797 & 804 & 861 & 868 & 925 & 932 & 989 & 996 &  29  &  36  & 93  & 100 & 157 & 164 & 221 & 228
&285 & 292 & 349 & 356 & 413 & 420 & 477 & 484 & 541 & 548 & 605 & 612 & 669 & 676 & 733 & 740 \\ \hline
227 & 222 & 163 & 158 & 99  & 94  & 35  & 30  &  995 &  990 & 931 & 926 & 867 & 862 & 803 & 798
& 739 & 734 & 675 & 670 & 611 & 606 & 547 & 542 & 483 & 478 & 419 & 414 & 355 & 350 & 291 & 286  \\ \hline
799 & 802 & 863 & 866 & 927 & 930 & 991 & 994 &  31  &  34  & 95  & 98  & 159 & 162 & 223 & 226 
&287 & 290 & 351 & 354 & 415 & 418 & 479 & 482 & 543 & 546 & 607 & 610 & 671 & 674 & 735 & 738  \\ \hline
225 & 224 & 161 & 160 & 97  & 96 &  33  & 32  &  993 &  992 & 929 & 928 & 865 & 864 & 801 & 800 
& 737 & 736 & 673 & 672 & 609 & 608 & 545 & 544 & 481 & 480 & 417 & 416 & 353 & 352 & 289 & 288 \\ \hline
776 & 825 & 840 & 889 & 904 & 953 & 968 & 1017 & 8 &    57  & 72  & 121 & 136 & 185 & 200 & 249 
& 264 & 313 & 328 & 377 & 392 & 441 & 456 & 505 & 520 &  569 & 584 & 633 & 648 & 697 & 712 & 761  \\ \hline
250 & 199 & 186 & 135 & 122 & 71  & 58  & 7    & 1018 & 967 & 954 & 903 & 890 & 839 & 826 & 775 
& 762 & 711 & 698 & 647 & 634 & 583 & 570 & 519 & 506 & 455 & 442 & 391 & 378 & 327 & 314 & 263 \\ \hline
774 & 827 & 838 & 891 & 902 & 955 & 966 & 1019 & 6 &    59  & 70  & 123 & 134 & 187 & 198 & 251
& 262 & 315 & 326 & 379 & 390 & 443 & 454 & 507 & 518 & 571 & 582 & 635 & 646 & 699 & 710 & 763 \\ \hline
252 & 197 & 188 & 133 & 124 & 69  & 60  & 5    & 1020 & 965 & 956 & 901 & 892 & 837 & 828 & 773
& 764 & 709 & 700 & 645 & 636 & 581 & 572 & 517 & 508 & 453 & 444 & 389 & 380 & 325 & 316 & 261 \\ \hline
772 & 829 & 836 & 893 & 900 & 957 & 964 & 1021 & 4 &    61  & 68  & 125 & 132 & 189 & 196 & 253
&260 & 317 & 324 & 381 & 388 & 445 & 452 & 509 & 516 & 573 & 580 & 637 & 644 & 701 & 708 & 765 \\ \hline
254 & 195 & 190 & 131 & 126 & 67 &  62  & 3    & 1022 & 963 & 958 & 899 & 894 & 835 & 830 & 771
& 766 & 707 & 702 & 643 & 638 & 579 & 574 & 515 & 510 & 451 & 446 & 387 & 382 & 323 & 318 & 259 \\ \hline
770 & 831 & 834 & 895 & 898 & 959 & 962 & 1023 & 2  &   63  & 66  & 127 & 130 & 191 & 194 & 255
&258 & 319 & 322 & 383 & 386 & 447 & 450 & 511 & 514 & 575 & 578 & 639 & 642 & 703 & 706 & 767 \\ \hline
256 & 193 & 192 & 129 & 128 & 65  & 64  & 1    & 1024 & 961 & 960 & 897 & 896 & 833 & 832 & 769
&768 & 705 & 704 & 641 & 640 & 577 & 576 & 513 & 512 & 449 & 448 & 385 & 384 & 321 & 320 & 257 \\ \hline
\end{array} \]
\end{adjustwidth}
}

\section{Method to construct Franklin squares.} \label{constructionsection}

In this section, we describe a process to construct Franklin
squares. The squares F1 and  F3 can also be constructed with
this method.

 A  Franklin square is  constructed in  two parts:  the left
 side consisting  of the first  $n/2$ columns and  the right
 side consisting of the last $n/2$ columns. The construction
 of both  the parts are  largely independent of  each other.
 Each side  is further  divided in to  three parts:  the top
 part consisting  of the first  $n/4$ rows, the  middle part
 consisting of  the middle $n/2$  rows, and the  bottom part
 consisting of the last $n/4$ rows. Throughout this article,
 reference to  these parts mean  these blocks of  rows. Each
 side is constructed by  adding pairs of columns equidistant
 from its center.  For a given part, and  a pair of columns,
 $ca$ and  $cs$, there are  two operations to  make entries,
 which  we call  Up and  Down. Let  $p$ denote  a  part, $e$
 denote the  number of  rows in $p$,  and let $rf$  and $rl$
 denote the first and last row of $p$, respectively. Let $s$
 denote a starting number. Throughout this article, from now
 on, $N=n^2+1$. The inputs to  the functions Up and Down are
 $N$, $p$, $ca$, $cs$, $s$, and $e$.

The function Up assigns values to the columns
$ca$ and $cs$ in the following manner.

For $i=0$ to $e-1$: $t=s+2i$ and
\[ \begin{array}{llllllllll}
  A_{rl-2i,cs} = t,  A_{rl-2i,ca} = N-t, A_{rl-1-2i,cs} = N-(t+1), A_{rl-1-2i,ca} = t+1. 
\end{array}
\]

The function Down assigns values to the square in the manner described below.

For $i=0$ to $e-1$: $t=s+2i$ and 
\[ \begin{array}{llllllllll}
A_{rf+2i,cs} = N-t,
A_{rf+2i,ca}= t,
A_{rf+1+2i,cs} = t+1,
A_{rf+1+2i,ca}= N-(t+1). 
\end{array}
\]

Observe that the Up function goes up the rows of the part in
a zig-zag fashion assigning values $t$ and $N-t$. Similarly,
the Down  function goes down  the rows of the  part. Observe
that these assignments guarantee that $A_{r,ca}+A_{r,cs}=N$,
for any row $r$ and pair of columns $ca,cs$.

The indices of column pairings and order of construction for
the left  side is $n/4-k$  and $n/4+1+k$; and for  the right
side is $n/2+1+k$ and  $n-k$ where $k=0, \dots, n/4-1$. This
order  of  construction  implies   that  the  left  side  is
constructed from the center going outwards whereas the right
side is  constructed from the outside  columns going inwards
to the center. The  numbers $1,2, \dots, n^2/4$ always appear
on  the left  side of  the Franklin  square and  the numbers
$n^2/4+1,  \dots, n^2/2$  appear  on the  right  side of  the
square.

We first construct the left  side of the Franklin square. To
begin, we do an Up  operation on the bottom part with $s=1$,
$cs =  n/4$, $ca=n/4+1$,  and $e=n/8$. Then,  we do  an Up
operation  on  the top  part  with  $s=n/4+1$,  $cs =  n/4$,
$ca=n/4+1$, and  $e=n/8$. Next, we do a  Down operation on
the middle part with  $s=n/2+1$, $cs = n/4$, $ca=n/4+1$, and
$e=n/4$. At  this point,  we have constructed  two columns
which contain the  integers $1, 2, \dots, n$  and $N-1, N-2,
\dots,N-n$.  Now, we do an  Up operation on  the middle part
with $s=n+1$, $cs = n/4-1$, $ca=n/4+2$, and $e=n/4$. Next,
we do a Down operation  on the top part with $s=3n/2+1$, $cs
= n/4-1$,  $ca=n/4+2$, and $e=n/8$. Finally, we  do a Down
operation on the bottom  part with $s=7n/4+1$, $cs = n/4-1$,
$ca=n/4+2$, and $e=n/8$.

Observe  that  four  columns  were created  with  the  above
operations. This sequence  of operations is repeated $n/4-1$
times to  complete the  left side of  the square,  where the
parameters  in the Up  and Down  operations are  replaced by
$s=s+2jn$,  $ca =  ca+2j$, $cs=cs-2j$,  where  $j=1, \dots,
n/4-1$.

The same  sequence of  operations and number  of repetitions
are used to construct the right side of the Franklin square.
The  parameters   $r$  and  $e$  remain   the  same  whereas
$s=s+(n/2)(n/2)$  for all  the Up  and Down  operations. The
changes in  the column parameters  $ca$ and $cs$  are listed
below.

\begin{tabular}{cccccccccccccccccccc}
 & Up(bottom) & Up(top) & Down(middle) & Up(middle) & Down(top) & Down(bottom) \\
ca: & n/2+1 &  n/2+1 &  n/2+1 & n/2+2& n/2+2& n/2+2 \\
cs: &  n & n & n & n-1 &n-1 &n-1  
\end{tabular}

See Figure \ref{leftfranklinfig} and Table \ref{tableleftf3}
for a  pictorial construction  of the left  side of  the $16
\times 16$ Franklin square F3. The pictorial construction of
right    side    of    F3    is    described    in    Figure
\ref{rightfranklinfig}  and  Table \ref{rightfranklintable}.
Maple procedures to construct  Franklin squares of any order
$n$ are provided in Section \ref{codesection}.

\begin{figure}[h]
 \begin{center}
\caption{Constructing the left side of the Franklin square F3.}
     \includegraphics[scale=0.3]{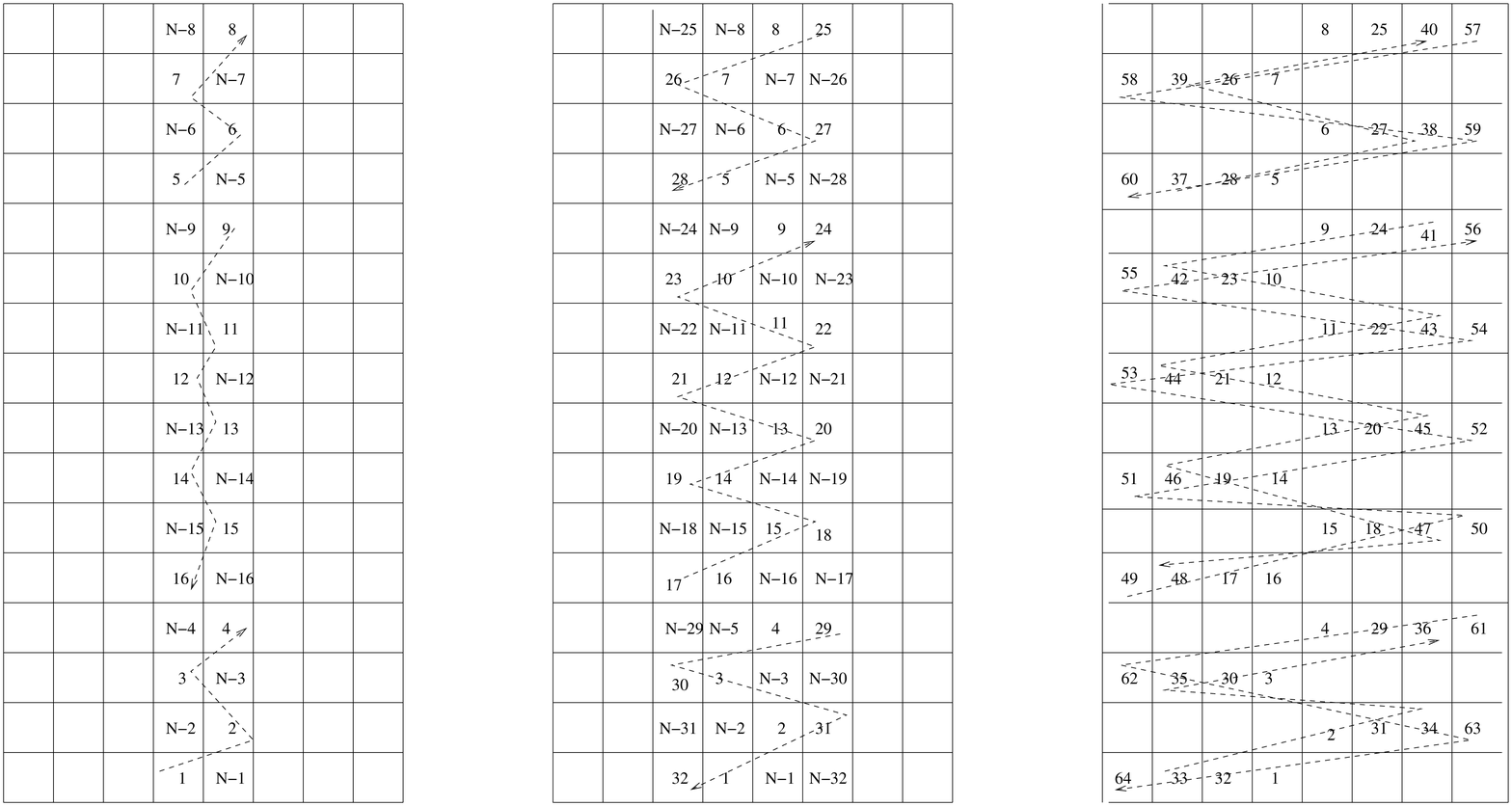} \label{leftfranklinfig}
\end{center}
 \end{figure}

\begin{table}
\caption{The left side of the Franklin square F3.} 
 \label{tableleftf3} 
\[
\begin{array}{|c|c|c|c|c|c|c|c|c|c|c|c|c|c|c|c|} \hline
N-57 & N-40 & N-25 & N-8 & 8 & 25     & 40 & 57 \\ \hline
58   & 39   & 26   & 7   & N-7 & N-26 & N-39 & N-58 \\ \hline
N-59 & N-38 & N-27 & N-6  & 6    & 27   & 38   & 59 \\ \hline
60   & 37   & 28   & 5    & N-5  & N-28 & N-37 & N-60 \\ \hline
N-56 & N-41 & N-24 & N-9  & 9    & 24   & 41   & 56 \\ \hline
55   & 42   & 23   & 10   & N-10 & N-23 & N-42 & N-55 \\ \hline
N-54 & N-43 & N-22 & N-11 & 11   & 22   & 43   & 54 \\ \hline
53   & 44   & 21   & 12  & N-12  & N-21 & N-44 & N-53 \\ \hline 
N-52 & N-45 & N-20 & N-13 & 13   & 20   & 45   & 52 \\ \hline
51   & 46   & 19   & 14  & N-14  & N-19 & N-46 & N-51 \\ \hline 
N-50 & N-47 & N-18 & N-15 & 15   & 18   & 47   & 50 \\ \hline
49   & 48   & 17   & 16  & N-16  & N-17 & N-48 & N-49 \\ \hline 
N-61 & N-36 & N-29 & N-4 & 4     & 29   & 36   & 61 \\ \hline
62   & 35   & 30   & 3   & N-3   & N-30 & N-35 & N-62 \\ \hline 
N-63 & N-34 & N-31 & N-2 & 2     & 31   & 34   & 63 \\ \hline
64   & 33   & 32   & 1   & N-1   & N-32 & N-33 & N-64 \\ \hline
\end{array} \]
\end{table}

\begin{figure}[h]
 \begin{center}
\caption{Constructing the right side of the Franklin square F3.}
     \includegraphics[scale=0.3]{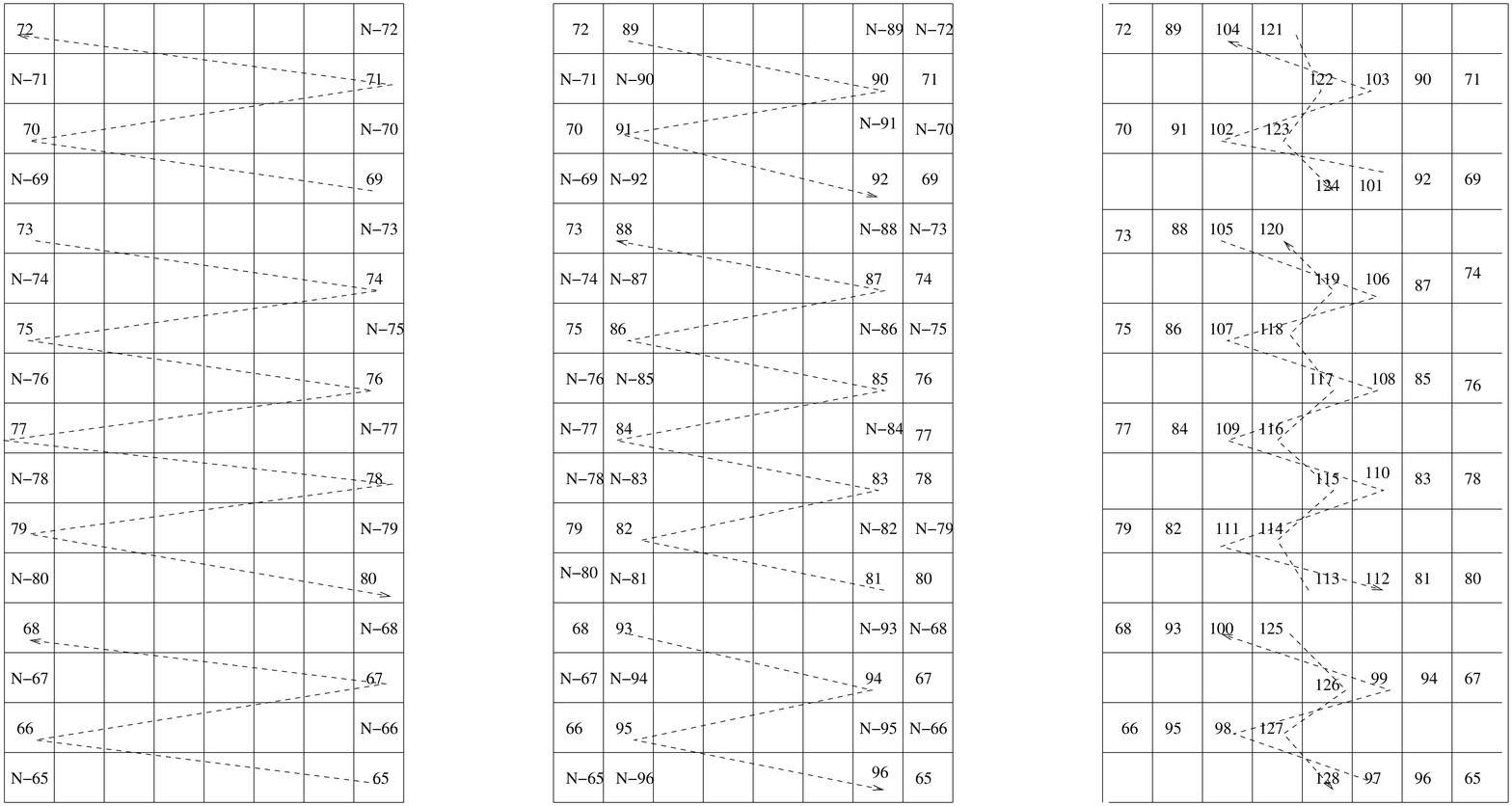} \label{rightfranklinfig}
\end{center}
 \end{figure}

\begin{table}
\caption{The right side of the Franklin square F3.} 
\label{rightfranklintable}
\[
\begin{array}{|c|c|c|c|c|c|c|c|c|c|c|c|c|c|c|c|} \hline
72   & 89   & 104   & 121   &N-121  & N-104 & N-89 & N-72 \\ \hline
N-71 & N-90 & N-103 & N-122 &122    &103    &90    &71 \\ \hline
70   & 91   & 102   & 123   &N-123  & N-102 & N-91 & N-70 \\ \hline
N-69 & N-92 & N-101 & N-124 &124    &101    &92    &69 \\ \hline
73   & 88   & 105   & 120   &N-120  & N-105 & N-88 & N-73 \\ \hline
N-74 & N-87 & N-106 & N-119 &119    &106    &87    &74 \\ \hline
75   & 86   & 107   & 118   &N-118  & N-107 & N-86 & N-75 \\ \hline
N-76 & N-85 & N-108 & N-117 &117    &108    &85    &76 \\ \hline
77   & 84   & 109   & 116   &N-116  & N-109 & N-84 & N-77 \\ \hline
N-78 & N-83 & N-110 & N-115 &115    &110    &83    &78 \\ \hline
79   & 82   & 111   & 114   &N-114  & N-111 & N-82 & N-79 \\ \hline
N-80 & N-81 & N-112 & N-113 &113    &112    &81    &80 \\ \hline
68   & 93   & 100   & 125   &N-125  & N-100 & N-93 & N-68 \\ \hline
N-67 & N-94 & N-99  & N-126 &126    &99     &94    &67 \\ \hline
66   & 95   & 98   & 127   &N-127  & N-98 & N-95 & N-66 \\ \hline
N-65 & N-96 & N-97  & N-128 &128    &97     &96    &65 \\ \hline

\end{array}
\]
\end{table}

\begin{prop} \label{verticalsumprop}
Let $A$ be  an $n \times n$ square  constructed by the above
procedure. Then $A$ is a Franklin square.
\end{prop}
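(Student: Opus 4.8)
The plan is to verify the four defining properties directly from the \textsf{Up}/\textsf{Down} rules, using two structural facts as a backbone. The first is the horizontal complementarity already recorded in the text, $A_{r,ca}+A_{r,cs}=N$ for every row $r$ and every constructed pair $(ca,cs)$; since on each side the paired columns are mirror images about the centre of that side, this says $A_{r,c}+A_{r,c'}=N$ whenever $c,c'$ are symmetric within a half. The second is a parity fact: each entry is either \emph{small} (a value $t\le n^2/2$, written as $t$) or \emph{large} (written as $N-t$), and reading down any single column these alternate small/large with the parity of the row, the alternation staying consistent across the three parts because the part boundaries fall on even rows. It is convenient to set $B_{r,c}=A_{r,c}-N/2$, so that a set of $k$ cells has the required sum $kN/2$ exactly when its $B$-sum vanishes; here $B$ is positive on large cells and negative on small ones, with $B_{r,c}=\sigma_{r,c}(N/2-t)$ and $\sigma_{r,c}=+1$ on large cells.

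For Property 1 I would track the parameters $s$ and $e$ through the prescribed sequence of calls. Each \textsf{Up} or \textsf{Down} writes the $2e$ consecutive magnitudes $s,s+1,\dots,s+2e-1$, split between its two columns, and the listed starting values chain together so that one round of six operations consumes an interval of length $2n$. Running over all rounds, the small magnitudes written on the left tile $\{1,\dots,n^2/4\}$ and those on the right tile $\{n^2/4+1,\dots,n^2/2\}$, each exactly once; the complements $N-t$ then supply $\{n^2/2+1,\dots,n^2\}$ exactly once. This gives the bijection with $\{1,\dots,n^2\}$ and hence $M=\tfrac n2N$. Property 2 for rows and half-rows is then immediate: a half-row is a union of $n/4$ mirror pairs, each contributing $N$, so it sums to $M/2$ and a full row to $M$; in $B$-language every row and half-row has $B$-sum $0$ by complementarity.

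The work concentrates in the vertical and continuous statements, all of which I would reduce to a single magnitude-balance lemma: in the relevant set of cells the total magnitude $t$ carried by large entries equals that carried by small entries. For a half-column the parity fact gives $n/4$ large and $n/4$ small entries, and the balance follows by matching the ``lean'' of each contributing operation against its neighbour: a full \textsf{Up} part leans its large magnitudes above its small ones by a fixed amount, and the adjoining half of the \textsf{Down} part leans the opposite way by the same amount, so the two cancel on each half-column, forcing the column sum to $M$. For the continuous $2\times2$ property the parity fact forces exactly two large and two small cells in every block, also across the toroidal seams since rows $n$ and $1$ have opposite parity; hence the $N/2$ terms cancel and only the balance $\sum_{\text{large}}t=\sum_{\text{small}}t$ remains. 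Writing this as $\sigma_c\Delta_c+\sigma_{c+1}\Delta_{c+1}=0$, where $\Delta_c$ is the vertical magnitude change in column $c$ over the two rows and $\sigma_c\in\{\pm1\}$ records its sign, I would check it for each adjacency type (two mirror-paired columns, two columns of neighbouring pairs, and the columns straddling the central and the left--right seams) and each row position (aligned, offset, and the wrap rows $n,1$). The bend diagonals are handled the same way: split each bent diagonal into its two length-$n/2$ arms and show its $B$-sum is $0$ by the same cancellation, carried out for every cyclic translate. The $4\times4$ and corner-plus-middle statements then need no separate argument, being the stated consequences of the $2\times2$ property.

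I expect the continuous $2\times2$ and bend-diagonal verifications to be the main obstacle. The filling operation changes (\textsf{Up} versus \textsf{Down}, and $ca$ versus $cs$) between the three parts, between successive column pairs, and between the two sides, so the magnitude balance must be organised as a finite but careful case analysis over adjacency types and, because of the toroidal phrasing, over all cyclic shifts, including the seams where a block or a diagonal arm crosses a boundary between regions built by different operations.
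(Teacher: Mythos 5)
Your plan follows essentially the same route as the paper's own proof: rows and half-rows from the complementarity $A_{r,ca}+A_{r,cs}=N$, the continuous $2\times 2$ property from the sign-alternation of vertical pair sums between adjacent columns, half-columns from cancellation between the Up-filled and Down-filled parts, and bend diagonals from pairing cells of the two arms across the bend --- your normalization $B=A-N/2$ and the ``magnitude balance'' lemma are a repackaging of the paper's explicit pair sums $N\pm(\cdot)$, and the finite case analysis you defer is exactly the set of computations the paper states without derivation. The only substantive addition is that you also verify Property 1 (each integer occurs exactly once) by tracking the parameters $s$ and $e$, a step the paper's proof silently omits.
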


\begin{proof}

Consider a column $c$ of $A$. Then, for the rows $n/4, 3n/4$ and $n$, if $c$ is odd, we have 
\[\begin{array}{lllllllll}
A_{n/4,c}+A_{n/4+1,c} = N+n/4, A_{3n/4,c}+A_{3n/4+1,c} = N-3n/4, 
A_{n,c}+A_{1,c} = N+n-1,
\end{array}
\]
and if $c$ is odd, we have

\[\begin{array}{lllllllll}
A_{n/4,c}+A_{n/4+1,c} = N-n/4, A_{3n/4,c}+A_{3n/4+1,c} = N+3n/4, A_{n,c}+A_{1,c} = N-(n-1).
\end{array} \]

Consider the remaining rows, that is,  $r \in \{1,2, \dots, n\} \setminus
\{n/4,  3n/4, n\}$.  In  the  top and  middle  parts of  the
square, that is, when $r < n/4$ or $r >n/2$, we have
\[\begin{array}{lllllllll}
A_{r,c}+A_{r+1,c} = N+1, & \mbox{if $c$ is odd, and  $r$ is odd,} \\
A_{r,c}+A_{r+1,c} = N-1, & \mbox{if $c$ is odd, and  $r$ is even,} \\
A_{r,c}+A_{r+1,c} = N-1, & \mbox{if $c$ is even, and  $r$ is odd,} \\
A_{r,c}+A_{r+1,c} = N+1, & \mbox{if $c$ is even, and  $r$ is even.} \\
\end{array}
\]

For the  middle part of the  square, that is, when  $n/4 < r
\leq n/2$, we have
\[\begin{array}{lllllllll}
A_{r,c}+A_{r+1,c} = N-1, & \mbox{if $c$ is odd, and  $r$ is odd,} \\
A_{r,c}+A_{r+1,c} = N+1, & \mbox{if $c$ is odd, and  $r$ is even,} \\
A_{r,c}+A_{r+1,c} = N+1, & \mbox{if $c$ is even, and  $r$ is odd,} \\
A_{r,c}+A_{r+1,c} = N-1, & \mbox{if $c$ is even, and  $r$ is even.} \\
\end{array}
\]

Thus, adjacent pair  of entries in the column  $c$ add  to $N \pm 1$
within a part of the square. At the boundaries, that is, for
the  rows  $n/4,  3n/4$  and  $n$,  the  sums  are  slightly
different.  Nevertheless, because  of the  alternating signs
between odd  and even  columns, it follows  that all  the $2
\times 2$ sub-squares add to $2N$, continuously. 

Also, for the  column $c$, the entries of  the top four rows
and the bottom four rows add to $(n/8)N+n/8$, if $c$ is odd,
and  $(n/8)N-n/8$, if $c$  is even.  Moreover, the  top four
rows  and the  last  four rows  of  the middle  part add  to
$(n/8)N-n/8$, if $c$ is odd,  and to $(n/8)N+n/8$, if $c$ is
even.  Consequently, all  the half  columns add  to $(n/4)N$
which is half the magic  sum. Therefore, all the columns add
to the magic sum. Since  the rows were constructed by adding
paired entries that add to $N$, it follows that all the half
rows and rows  add to the half the magic  and the magic sum,
respectively.

Pairs of entries of the column $c$ that are equidistant from
the center  add to the following  sums.

 If $c$ is odd, when  $i=0, \dots n/4-1$ (which restricts to
 the top and bottom parts of $A$), we have
\[A_{1+i,c}+A_{n-i,c} = N+(n/2-1-2i).
\]
For  the middle  part of  the  square, that is, when  $i=0,
\dots, n/4-1$ , we have
\[A_{n/2-i,c}+A_{n/2+1+i,c} = N-(1+2i).
\]

When $c$ is even: for the  top and bottom part of $A$, which
implies, $i=0, \dots n/4-1$, we have
\[A_{1+i,c}+A_{n-i,c} = N-(n/2-1-2i),
\]
and for the middle part, where  $i=0, \dots n/4-1$, we get 
\[A_{n/2-i,c}+A_{n/2+1+i,c} = N+(1+2i).
\]

Consequently, the entries of a right or left diagonal always add to the sum 
 \[
\sum_{i=0}^{n/4-1} N+(1+2i) +N-(1+2i) = \frac{n}{2}N.
\]

 Hence all the  left and right bend diagonals  always add to
 the magic sum.

Along, a  top or bottom  bend diagonal, two pairs  of adjacent
diagonal entries equidistant from the center, always, add to
$2N$ as follows.

For $r=1, \dots, n-1$ and $i=0,1, \dots, n/4-1$:
\[
(A_{r,1+2i} + A_{r+1,2+2i}) + (A_{r,n-2i}+A_{r+1,n-1-2i}) = 2N.
\]
\[
(A_{n,1} + A_{1,2}) + (A_{n,n}+A_{1,n-1}) = 2N. 
\]
Hence all the top diagonals add to the common magic sum, continuously.

For $r=n,n-1, \dots, 2$  and $i=0,1, \dots, n/4-1$:
\[(A_{r,1+2i}+A_{r-1,2+2i}) + (A_{r,n-2i}+A_{r-1,n-1-2i}) = 2N.
\]
\[
(A_{1,1}+A_{n,2}) + (A_{1,n}+A_{n,n-2}) = 2N.
\]
Therefore, all the bottom diagonals add to the common magic sum, continuously.

Thus, we conclude that $A$ is a Franklin square.

\end{proof}

\section{Maple Program to construct Franklin squares.} \label{codesection}

In this section, we provide a Maple procedure to construct a
$n \times n$ Franklin square.  The input to the procedure is
$n$. For  example, the command {\em  Franklin(16)} creates a
$16 \times 16$ Franklin square.

\begin{verbatim}
# nn is the order of the Franklin square
Franklin := proc(nn)
local A, n, N, i, j,t,s,r,cs,ca,e;
n:=eval(nn);
A := matrix(n,n,0);
N:=n*n+1;
#Start j loop
for j from 0 to n/8-1 do
# Constructing the left half of the square
#Bottom quarter going up
s:=1; r:=n; cs:=n/4; ca:=n/4+1; e:=n/8-1;
Up(A,n,N,s,r,cs, ca,e,j);
#Top quarter going up
s:=n/4+1; r:=n/4; cs:=n/4; ca:=n/4+1; e:=n/8-1; 
Up(A,n,N,s,r,cs,ca,e,j);
#Middle half going up 
s:=n+1; r:=3*n/4; cs:=n/4-1; ca:=n/4+2; e:=n/4-1; 
Up(A,n,N,s,r,cs,ca,e,j);
#Middle half going down
s:=n/2+1; r:=n/4+1; cs:=n/4; ca:=n/4+1; e:=n/4-1; 
Down(A,n,N,s,r,cs,ca,e,j);
#Top quarter going down
s:=3*n/2+1; r:=1; cs:=n/4-1; ca:=n/4+2; e:=n/8-1; 
Down(A,n,N,s,r,cs,ca,e,j);
#bottom quarter going down
s:=7*n/4+1; r:=3*n/4+1; cs:=n/4-1; ca:=n/4+2; e:=n/8-1; 
Down(A,n,N,s,r,cs,ca,e,j);


#Constructing the right half of the square 
#Bottom quarter going up
s:=(n/2)*(n/2)+1; r:=n; ca:=n/2+1; cs:=n; e:=n/8-1; 
Up(A,n,N,s,r,cs,ca,e,j);
#Top quarter going up
s:=(n/2)*(n/2)+n/4+1; r:=n/4; ca:=n/2+1; cs:=n; e:=n/8-1; 
Up(A,n,N,s,r,cs,ca,e,j);
#Middle half going up
s:=(n/2)*(n/2)+n+1; r:=3*n/4; ca:=n/2+2; cs:=n-1; e:=n/4-1; 
Up(A,n,N,s,r,cs,ca,e,j);
#Middle half going down
s:=(n/2)*(n/2)+n/2+1; r:=n/4+1; ca:=n/2+1; cs:=n; e:=n/4-1; 
Down(A,n,N,s,r,cs,ca,e,j);
#Top quarter going down
s:=(n/2)*(n/2)+3*n/2+1; r:=1; ca:=n/2+2; cs:=n-1; e:=n/8-1; 
Down(A,n,N,s,r,cs,ca,e,j);
#bottom quarter going down
s:=(n/2)*(n/2)+7*n/4+1; r:=3*n/4+1; ca:=n/2+2; cs:=n-1; e:=n/8-1; 
Down(A,n,N,s,r,cs,ca,e,j);
od; # end of j loop
print(A);
end;

# The Procedures Up and Down that are called from the Franklin main procedure

Up := proc(A,n,N,s,r,cs,ca,e,j)
local i,t;
for i from 0 to e do
t:=s+2*j*n+2*i; 
A[r-2*i,cs-2*j] := t;
A[r-2*i,ca+2*j]:= N-t;
A[r-1-2*i,cs-2*j] := N-(t+1);
A[r-1-2*i,ca+2*j]:= t+1; 
od;
end;

Down := proc(A,n,N,s,r,cs,ca,e,j)
local i,t;
for i from 0 to e do
t:=s+2*j*n+2*i; 
A[r+2*i,cs-2*j] := N-t;
A[r+2*i,ca+2*j]:= t;
A[r+1+2*i,cs-2*j] := t+1;
A[r+1+2*i,ca+2*j]:= N-(t+1); 
od;
end;







\end{verbatim}

\end{document}